\newtheorem{theorem}{Theorem}[section]
\newtheorem{lemma}[theorem]{Lemma}
\theoremstyle{definition}
\newtheorem{definition}[theorem]{Definition}
\theoremstyle{remark}
\numberwithin{equation}{section}
\def\bR{{\mathbb {R}}}
\def\pB{{\mathcal B}}
\newcommand\xqed[1]{%
  \leavevmode\unskip\penalty9999 \hbox{}\nobreak\hfill
  \quad\hbox{#1}}
\newcommand\tqed{\xqed{$\triangle$}}
\def\VB{{\mathcal {VB}}}
\def\VCD{{\mathcal {VCD}}}
\def\cVCD{{\mathcal {CVCD}}}
\begin{document}

\title{Distinguishing virtual braids in polynomial time}
\author{Oleg Chterental}
\email{oleg.chterental@mail.utoronto.ca}

\maketitle
\begin{center}\today\end{center}

\begin{abstract}
For $n \geq 2$ we describe an $O(l^3n)$-time algorithm that determines if a length $l$ virtual braid word in the standard presentation of the virtual braid group $\VB_n$ represents the trivial virtual braid.
\end{abstract}

\section{Introduction}

The virtual braid group $\VB_n$ with $n \geq 2$ strands has a presentation with generators $\sigma_1, \ldots, \sigma_{n-1}$ and $\tau_1, \ldots, \tau_{n-1}$ and relations $\sigma_i \sigma_j = \sigma_j \sigma_i$ for $|i-j|>1$, $\sigma_i \sigma_{i+1} \sigma_i=\sigma_{i+1} \sigma_i \sigma_{i+1}$ for $1 \leq i \leq n-2$, $\tau_i \tau_j = \tau_j \tau_i$ for $|i-j|>1$, $\tau_i \tau_{i+1} \tau_i=\tau_{i+1} \tau_i \tau_{i+1}$ for $1 \leq i \leq n-2$, $\tau_i^2=1$ for $1 \leq i \leq n-1$, $\sigma_i \tau_j = \tau_j \sigma_i$ for $|i-j|>1$ and $\tau_{i+1} \sigma_i \tau_{i+1} = \tau_i \sigma_{i+1} \tau_i$ for $1 \leq i \leq n-2$. It can be shown that the $\sigma_i$'s and the $\tau_i$'s generate embedded copies of the classical braid group $\pB_n$ and the symmetric group on $n$ symbols ${\rm Sym}_n$ respectively. A \textbf{virtual braid word} is a word over the alphabet of $\sigma_i$'s, $\sigma_i^{-1}$'s and $\tau_i$'s. We denote the length of a virtual braid word $\beta$ by $|\beta|$.

The word problem for the virtual braid groups $\VB_n$ was first shown to have a solution in Godelle and Paris \cite{GP} and a simpler approach was later found in Bellingeri, Cisneros de la Cruz and Paris \cite{BCP}. In \cite{C} we described the set $\VCD_n$ of virtual curve diagrams (vcd) and showed that there is a faithful action of $\VB_n$ on $\VCD_n$ which can be seen as an extension of the Artin action \cite{A} of $\pB_n$ on $F_n$, the rank $n$ free group. A virtual braid $\beta \in \VB_n$ can be recovered from the vcd $\beta \cdot I_n$ where $I_n$ is the trivial virtual curve diagram with $n$ curves and in particular a virtual braid word represents the trivial braid iff $\beta \cdot I_n=I_n$.

In the present paper we describe an encoding of virtual curve diagrams, in which each maximal collection of parallel arcs is replaced with a single multi-arc labelled with a positive integer. For classical braid words $\beta \in \pB_n$, it is known that the number of parallelity classes in a simplified curve diagram representing $\beta \cdot I_n$ is bounded above by a linear expression in $n$ independent of $|\beta|$, and each parallelity class has size that is bounded above by $M^{|\beta|}$ for some fixed positive integer $M$, independent of $|\beta|$ and $n$. The decimal representation of each parallelity class thus has at most $\frac{\log(M)}{\log(10)}|\beta|+1$ digits. This leads to an $O(|\beta|^2n)$ algorithm to calculate the encoding of a simplified representative of $\beta \cdot I_n$. This has been noted for example in the introduction of Dynnikov and Wiest \cite{DW}.

For a virtual braid word $\beta \in\VB_n$ we show that the number of parallelity classes in a simplified representative of $\beta \cdot I_n$ grows linearly in $|\beta|$ and $n$, and the size of each parallelity class is bounded above by $R^{|\beta|}$ for some fixed positive integer $R$ independent of $|\beta|$ and $n$. The decimal representation of the size of each parallelity class thus has at most $\frac{\log(R)}{\log(10)}|\beta|+1$ digits, and so in Theorem \ref{thmmain} we see that an encoding of a simplified representative of $\beta \cdot I_n$ can be calculated in $O(|\beta|^3n)$ time. 

\section{Condensed virtual curve diagrams}

We will assume the following terminology from \cite{C}: virtual curve diagrams, points, curves, $<_P$, $<_C$, upper points, base points, adjacent points, arcs, over arcs, under arcs, base arcs, terminal arcs, arcs enclosing a point, crossing arcs, free under arcs, $T$-moves, $B$-moves, and simplified vcds. If $a,b$ are points that bound an arc in a vcd and $a <_C b$ recall that we use the notation $(a,b)$ to denote the arc. 

\begin{definition}[Condensed virtual curve diagrams] We will say two over or two under arcs $(a,b)$ and $(c,d)$ are \textbf{parallel} if $a$ and $c$ (resp. $d$) are adjacent, $b$ and $d$ (resp. $c$) are adjacent, and the arcs do not cross. The reflexive transitive closure of the above definition will also be called parallelity. Each base arc we consider to be parallel only to itself. We will call a parallelity class $A$ a \textbf{condensed arc}. The number of arcs in a condensed arc $A$ will be refered to as its \textbf{weight} and will be denoted $|A|$.

If $(a,b)$ is an arc we say the points $a$ and $b$ are \textbf{incident} to the arc. A point is said to be incident to a condensed arc $A$ if it is incident to an arc in $A$. Two condensed arcs are incident if they are incident to the same point. It is necessarily the case that a pair of incident condensed arcs includes a condensed over arc and either a condensed under or condensed base arc. Let $a_1<_P \ldots <_P a_r$ and $b_r <_P \ldots <_P b_1$ be the points incident to a condensed over or under arc $A$ of weight $r$, where each pair $\{a_i,b_i\}$ determines an arc in $A$ (either $(a_i,b_i)$ or $(b_i,a_i)$ depending on if $a_i <_C b_i$ or $b_i <_C a_i$). The points $a_1$, $a_r$, $b_r$ and $b_1$ will be refered to as the \textbf{corners} of the condensed arc $A$. A condensed arc with weight one has only two corners. We denote by $A\urcorner$ the set of corners of $A$. For a condensed arc $A$ let $type(A) \in \{o,u,b\}$ indicate whether $A$ is a condensed over, under or base arc respectively.

\begin{figure}
\begin{center}
\includegraphics[scale=.85]{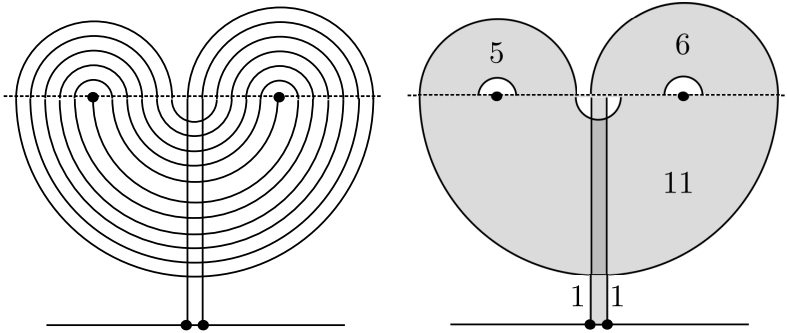}
\caption{A condensed vcd with $5$ condensed arcs and $12$ corners.}\label{figcondvcd}
\end{center}
\end{figure}

It should be evident that one may completely specify a virtual curve diagram by deleting all points that are not terminal points or corners, all arcs that are not incident to corners, and labelling condensed arcs by their weight, as in Figure \ref{figcondvcd}. A bit more formally, let $E=(P,<_P,<_C)$ be a vcd with $n$ curves (recall $P$ is the set of points, $(P,<_P)$ is a strict partial order consisting of two disjoint chains $U$ and $B$ containing the upper points and the base points, and $(P,<_C)$ is a strict partial order containing $n$ disjoint chains representing the $n$ curves of $E$, see \cite[Definition 2.1]{C} for more details) with $m$ condensed arcs $A_1,\ldots,A_m$. The \textbf{condensed virtual curve diagram} $D$ with $n$ curves, whose \textbf{underlying} vcd is $E$, is given by the tuple $D=(T, \{A_i\urcorner\}_{i=1}^m,\{|A_i|\}_{i=1}^m,<_{P\urcorner}, \{type(A_i)\}_{i=1}^m)$ where $T \subset P$ is the set of terminal points of $E$, and $<_{P\urcorner}$ is the restriction of $<_P$ to $P\urcorner=T \cup \bigcup_i (A_i\urcorner)$. We denote by $\cVCD_n$ the set of condensed virtual curve diagrams with $n$ curves.
\tqed
\end{definition}

\section{Condensed $T$ and $B$-moves}

Let $(a,b)$ be an over or under arc with $a$ and $b$ adjacent. Recall the $T$ and $B$-moves for vcds in Figure \ref{figtb}. If $b$ is a terminal point then a $T$-move may be performed by deleting $b$. Otherwise a $B$-move may be performed by deleting both points $a$ and $b$. A $T$ or $B$-move that reduces the number of points is called a simplification and a vcd admitting no simplifying $T$ or $B$-moves is simplified. Any vcd is equivalent via $T$ and $B$-moves to a unique simplified vcd. This is analogous to the fact that any element of a free group has a unique freely reduced representative. A condensed vcd will be called \textbf{simplified} if its underlying vcd is simplified.

\begin{figure}
\begin{center}
\includegraphics[scale=.85]{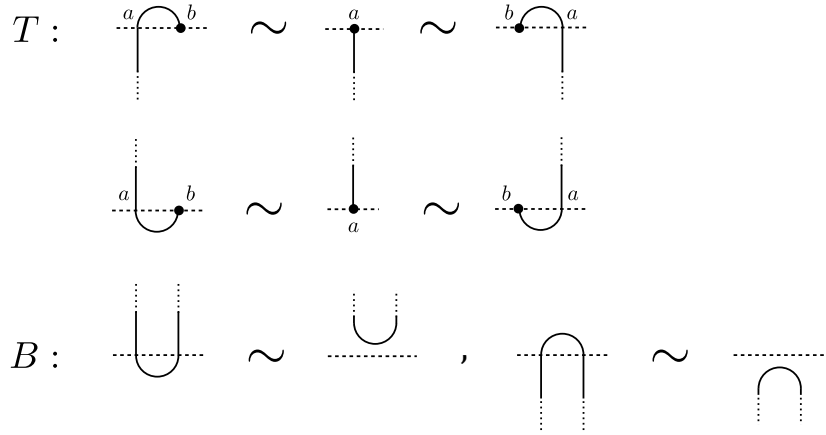}
\caption{The $T$ and $B$-moves for vcds.}\label{figtb}
\end{center}
\end{figure}

\begin{figure}
\begin{center}
\includegraphics[scale=.85]{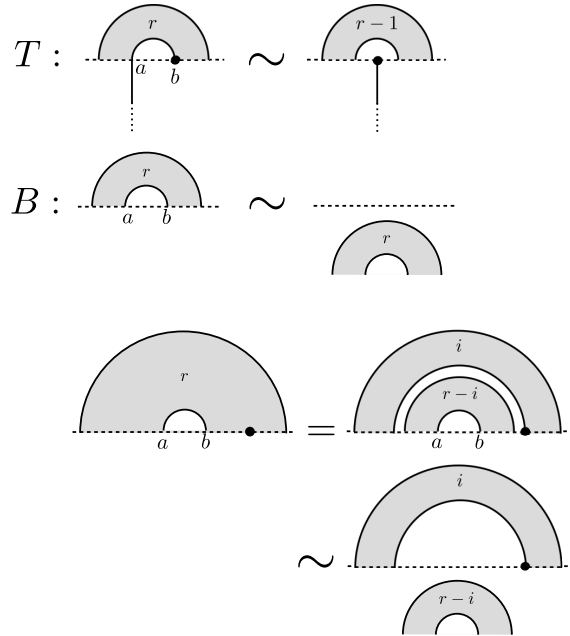}
\caption{The condensed $T$ and $B$-moves for condensed vcds (horizontal and vertical reflections not shown).}\label{figcondtb}
\end{center}
\end{figure}

\begin{definition}[Condensed $T$ and $B$-moves]\label{defcondtb}
Condensed versions of the $T$ and $B$-moves are described in Figure \ref{figcondtb}. Let $a_1<_P \ldots <_P a_r$ and $b_r <_P \ldots <_P b_1$ be the points incident to a condensed over or under arc $A$ of weight $r$. If $a_r$ and $b_r$ are adjacent and one of $a_r$ or $b_r$ is a terminal point then a simplifying condensed $T$-move may be performed.

If $a_r$ and $b_r$ are not terminal points then a simplifying condensed $B$-move may be performed, of which there are two versions. If the condensed arc $A$ contains no terminal points at all then a simplifying condensed $B$-move may be performed by deleting all points of $A$. Otherwise choose the largest $1 \leq i < r$ such that one of $a_i$ or $b_i$ is a terminal point. A simplifying condensed $B$-move may be performed by deleting all points $a_j$ and $b_j$ for $i<j \leq r$.
\tqed
\end{definition}

Recall that an under arc in a vcd is free if it does not cross any other under or base arc and does not enclose any pair of crossing under arcs. A base arc will be said to be free if it does not cross any under or base arc. Similarly we define a condensed under or base arc to be \textbf{free} if it consists of free under or base arcs. The following lemma summarizes some elementary facts about the condensed $T$ and $B$-moves and condensed vcds in general.

\begin{lemma}\label{lemcond}
\begin{enumerate}[a)]
\item\label{cond1} A condensed vcd is simplified iff it admits no simplifying condensed $T$ or $B$-moves.
\item\label{cond2} If $D_1$ and $D_2$ are simplified condensed vcds related by condensed $T$ and $B$-moves then $D_1=D_2$.
\item\label{cond3} If $D \in \cVCD_n$ is a simplified condensed vcd then $D$ has at most $2n-1$ condensed over arcs, at most $2n-1$ free condensed under arcs and at most $n$ free condensed base arcs.
\end{enumerate}
\end{lemma}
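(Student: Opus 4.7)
My plan is to reduce each part of the lemma to facts about the underlying vcd from \cite{C}.

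For (\ref{cond1}), the easy direction reads: if the underlying vcd $E$ is simplified then no ordinary $T$- or $B$-move is available, so for any condensed arc $A$ its innermost arc cannot be simplified in $E$, and none of the triggers in Definition~\ref{defcondtb} can fire. For the converse I suppose $E$ admits a simplifying move on some arc $(a,b)$. The required adjacency of $a,b$ in $<_P$, together with the observation that any arc parallel to $(a,b)$ must have its endpoints on the outside of $\{a,b\}$, forces $(a,b)$ to be the innermost arc of its parallelity class $A$. A short case split on whether one of $a,b$ is terminal then produces either a condensed $T$-move or one of the two condensed $B$-moves.

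For (\ref{cond2}), the key observation is that every condensed move depicted in Figure~\ref{figcondtb} factors as a finite sequence of ordinary simplifying $T$- and $B$-moves applied to the innermost arcs of the affected condensed arcs. Thus $D_1$ and $D_2$ related by condensed moves implies the underlying vcds $E_1$ and $E_2$ are related by ordinary moves. Since by definition $D_i$ is simplified exactly when $E_i$ is simplified, the uniqueness of simplified representatives under $T,B$-moves recalled just before Figure~\ref{figcondtb} gives $E_1=E_2$. The condensed vcd tuple is recovered entirely from its underlying vcd, so $D_1=D_2$.

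For (\ref{cond3}), the base-arc bound is immediate: the definition forces each base arc to be its own parallelity class, and an $n$-curve vcd has at most $n$ base arcs. For the two $2n-1$ bounds I would combine planarity with the non-parallelity enforced by simplification. The condensed over arcs form a non-crossing family supported on the upper chain, and after collapsing parallelity classes the remaining arcs assemble into a planar tree-like structure over the $n$ curves. Identifying the condensed over arcs with internal nodes of a planar binary tree whose leaves correspond to the $n$ curves yields the bound $2n-1$ by the standard count of internal nodes in such a tree. The free condensed under arcs are non-crossing by freeness and non-parallel by simplification, and an identical analysis on the under side gives the same $2n-1$ bound.

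I expect (\ref{cond3}) to be the main obstacle: while a linear-in-$n$ bound follows quickly from planarity, pinning down the sharp constant $2n-1$ requires a careful combinatorial identification of the binary-tree structure on the condensed arcs, verifying that each condensed over (respectively free under) arc really plays the role of a single internal node of a tree with the $n$ curves as leaves, and carefully handling how corners and terminal points interleave along the upper and base chains of a simplified vcd.
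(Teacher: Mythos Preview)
Your arguments for (\ref{cond1}) and (\ref{cond2}) are correct and essentially identical to the paper's: both directions of (\ref{cond1}) go through the innermost arc of a condensed arc, and (\ref{cond2}) is obtained by passing to the underlying vcds and invoking uniqueness of simplified representatives.

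For (\ref{cond3}) there is a concrete error and a real gap. The error: a planar binary tree with $n$ leaves has $n-1$ internal nodes, not $2n-1$, so identifying condensed over arcs with \emph{internal} nodes cannot yield the stated bound. What gives $2n-1$ is the total node count of such a tree, or equivalently the standard bound on the size of a laminar family of nonempty intervals on $k\le n$ points; the paper states this as the maximal number of pairwise non-parallel disjoint simple arcs in the upper half-plane minus $k$ boundary points.

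The gap is the reduction step. Parallelity of over arcs is defined via \emph{adjacency} in $<_P$, not via isotopy rel the terminal points, so before any tree or laminar-family count can be invoked you must show that in a \emph{simplified} vcd two over arcs lie in the same parallelity class iff they enclose the same set of terminal points (the paper restricts further to terminal points incident to terminal under arcs). This is exactly where simplification is used: without it one can nest arbitrarily many over arcs enclosing no terminal point, each its own parallelity class, and no bound in $n$ holds. The paper makes this reduction explicit (arguing that every over arc in a simplified vcd encloses at least one such terminal point, and that equal enclosure forces parallelity) before applying the $2k-1$ count. Your sketch jumps directly to the topological picture without deriving it from the adjacency-based definition, and the acknowledgement that this ``requires a careful combinatorial identification'' is precisely the missing content.
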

\begin{proof}
\begin{enumerate}[a)]
\item Recall that a simplified condensed vcd is by definition a condensed vcd whose underlying vcd is simplified. We must show this is equivalent to the condensed vcd admitting no simplifying condensed $T$ or $B$-moves.

First we show that a simplified condensed vcd $D$ admits no simplifying condensed $T$ or $B$-moves. Let $E$ be the underlying simplified vcd of $D$. Assume for a contradication that $D$ admits a simplifying condensed $T$ or $B$-move. By Definition \ref{defcondtb}, there must be an arc $(a_r,b_r)$ in $E$ for which $a_r$ and $b_r$ are adjacent. It follows that $E$ admits a simplifying $T$ or $B$-move contradicting the fact that it is a simplified vcd. Thus $D$ cannot admit a simplifying condensed $T$ or $B$-move.

Assume now that the condensed vcd $D$ admits no simplifying condensed $T$ or $B$-move and let $E$ be its underlying vcd. We must show that $D$ is simplified, which by definition means we must show that $E$ is simplified. Assume that $E$ is not simplified. Thus there must be an arc $(a,b)$ in $E$ for which $a$ and $b$ are adjacent. By Definition \ref{defcondtb}, $D$ admits a simplifying condensed $T$ or $B$-move along the condensed arc containing $(a,b)$, contradicting the assumption that $D$ admits no simplifying condensed $T$ or $B$-moves. Thus $E$ is simplified and so $D$ is simplified as well.

\item Assume $D_1$ and $D_2$ are simplified condensed vcds related by condensed $T$ and $B$-moves. Their underlying vcds $E_1$ and $E_2$ are both simplified and related by $T$ and $B$-moves. Thus $E_1=E_2$ and so $D_1=D_2$.

\item We use the fact that the maximum size of a collection of non-parallel disjoint simple arcs properly embedded in $\{(x,y) \in \bR^2: y \geq 0\}\setminus \{(i,0)\}_{i=1}^k$, the upper half-plane minus $k$ points along the $x$-axis, is $2k-1$. This is straightforward to prove by induction on $k \geq 1$.

Note that by the definition of a vcd, no two over arcs may cross. Let $D \in \cVCD_n$ be a simplified condensed vcd and $E \in \VCD_n$ its underlying simplified vcd. The vcd $E$ has $n$ terminal points and so has $k \leq n$ terminal points incident to terminal under arcs. Since $E$ is simplified, any over arc must enclose at least one terminal point, and specifically at least one terminal point incident to a terminal under arc (other wise consider an innermost over arc that does not enclose a terminal point incident to a terminal under arc, and simplify). Two over arcs of $E$ are parallel exactly when they enclose the same terminal points that are incident to terminal under arcs. Using the above fact about non-parallel collections of disjoint simple arcs in a half-plane minus $k$ points along the $x$-axis, it follows that a simplified condensed vcd can have at most $2k-1 \leq 2n-1$ condensed over arcs. This reasoning applies to free condensed under arcs for the same reasons. All condensed vcds in $\cVCD_n$ contain at most $n$ free condensed base arcs since they have $n$ condensed base arcs.
\end{enumerate}
\end{proof}

\section{The action of $\VB_n$ on $\cVCD_n$}

Let $D \in \cVCD_n$ be a simplified condensed virtual curve diagram and let $E \in \VCD_n$ be its underlying vcd. Using \cite[Section 3]{C} we may directly calculate $g \cdot E$ for any length one virtual braid word $g$. We define $g \cdot D$ to be the unique condensed virtual curve diagram with underlying vcd $g \cdot E$. If $\beta$ is a virtual braid word we define $\beta \cdot D$ using the rule $(\alpha g)\cdot D=\alpha \cdot (g \cdot D)$, as for vcds.

To analyze the possible simplifying condensed $T$ and $B$-moves that arise when calculating $g \cdot D$ we introduce some terminology. Let $t_1 <_P \ldots <_P t_n$ be the terminal points of $E$. We say an upper point $a$ of $E$ is \textbf{right-veering} if it is incident to an over arc with some point $a <_P b$ and left-veering if it is incident to an over arc with some point $b <_P a$ (assuming it is incident to an over arc). We say a simplifying $B$-move in some vcd along an under arc $(a,b)$ with $a$ and $b$ adjacent is \textbf{superficial} if $a$ and $b$ are incident to over arcs and veer in different directions. We say a simplifying condensed $B$-move in some condensed vcd is \textbf{superficial} if each of the simplifying $B$-moves in the sequence is superficial.

For a simplified vcd $E$, all left-veering points precede all right-veering points in each interval $[t_i,t_{i+1}]_{<_P}$ along the $<_P$ order. If $(t_i,t_{i+1})_{<_P}$ contains a left-veering point we let $u_i$ be the $<_P$-largest left-veering point otherwise let $u_i=t_i$. Let $v_i$ be the $<_P$-smallest point in $(u_i,t_{i+1}]_{<_P}$. We refer to the points $u_i$ and $v_i$ as the \textbf{center} points of $[t_i,t_{i+1}]_{<_P}$.

\begin{lemma}\label{lemsimp}
\begin{enumerate}[a)]
\item\label{simp1} Let $D \in \cVCD_n$ be simplified and let $g=\sigma_i^\pm$ be a braid generator or its inverse. The condensed vcd $g \cdot D$ can be simplified by applying at most one simplifying condensed $B$-move and at most one simplifying condensed $T$-move. If $D$ contains $m$ condensed arcs then the simplified representative of $g \cdot D$ has at most $m+4n$ condensed arcs.
\item\label{simp2} If $D \in \cVCD_n$ is simplified then $\tau_i \cdot D$ can be simplified in at most three simplifying condensed $B$-moves and four simplifying condensed $T$-moves. If $D$ contains $m$ condensed arcs then the simplified representative of $\tau_i \cdot D$ contains at most $m+17n$ condensed arcs.
\item\label{simp3} If $\beta \in \VB_n$ is a virtual braid word then the simplified representative of the condensed vcd $\beta \cdot I_n$ has at most $(1+17|\beta|)n$ condensed arcs, each with weight at most $6^{|\beta|}$.
\end{enumerate}
\end{lemma}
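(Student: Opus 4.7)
The plan is to prove parts (a) and (b) by direct local case analysis of the action of individual generators on a simplified condensed vcd, and then to deduce (c) by induction on $|\beta|$.

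For (a), applying $\sigma_i^\pm$ to a simplified $D\in\cVCD_n$ inserts a single over or under arc between the condensed base arcs incident to the terminal points $t_i$ and $t_{i+1}$, so $g\cdot D$ differs from $D$ only in a small neighborhood of these points. I would enumerate the few local configurations distinguished by which of $t_i,t_{i+1}$ are left- or right-veering and what types of condensed arcs are incident, and verify in each case that at most one simplifying condensed $B$-move followed by one simplifying condensed $T$-move returns the result to simplified form. The bound $m+4n$ comes from counting the bounded number of new condensed arcs introduced per affected strand, using the $n$-factor simply because the local modification can split at most $O(n)$ condensed arcs that traverse the region.

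For (b), the analysis is analogous but more elaborate, since $\tau_i$ performs a virtual interchange of strands $i$ and $i+1$ and thus redirects every condensed arc passing through the interval $[t_i,t_{i+1}]_{<_P}$. The plan is to organize the cases using the center points $u_i,v_i$ so that the left-veering and right-veering parts of the interval can be handled separately; the superficial $B$-moves are responsible for the bulk of the simplification, while the $T$-moves clean up near the terminal points. A careful case-by-case count should bound the total number of simplifying moves by three $B$-moves plus four $T$-moves and the new arc count by $17n$, tracking the new condensed arcs created on either side of the interchange.

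Granting (a) and (b), part (c) follows by induction on $|\beta|$. The base case $|\beta|=0$ gives $\beta\cdot I_n=I_n$, which is already simplified with exactly $n$ condensed arcs of weight $1=6^0$. In the inductive step, write $\beta=\alpha g$ with $g$ a generator; by the inductive hypothesis the simplified representative of $\alpha\cdot I_n$ has at most $(1+17(|\beta|-1))n$ condensed arcs, each of weight at most $6^{|\beta|-1}$, and applying (a) or (b) (using the weaker $17n$ bound in either case) produces a simplified representative of $\beta\cdot I_n$ with at most $(1+17|\beta|)n$ condensed arcs. For the weight, note that applying the generator itself does not merge parallelity classes; merging happens only during simplifying $B$- and $T$-moves, where the new condensed arc's weight is the sum of the weights of the arcs it merges. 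A careful accounting of the at most seven simplifying moves per generator application shows that the maximum weight grows by a factor of at most $6$, yielding the desired bound $6\cdot 6^{|\beta|-1}=6^{|\beta|}$. The main obstacle will be the explicit case analysis in (b): verifying the constants $3$, $4$, and $17$ requires a systematic enumeration of how $\tau_i$ interacts with each local configuration of incident condensed over, under, and base arcs, organized via the center points $u_i,v_i$ and the superficial $B$-move formalism. A secondary difficulty is the bookkeeping in (c) needed to justify the factor of $6$ per generator application rather than a naive exponential-in-number-of-moves bound.
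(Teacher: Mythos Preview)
Your outline has the right inductive architecture for (c), but there are genuine gaps in (a), (b), and especially in the weight bound of (c).

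For (a) your description of the action is off: $\sigma_i$ does not ``insert a single over or under arc''; it drags the terminal point $t_i$ over $t_{i+1}$, deforming \emph{every} over arc and free under arc of the underlying vcd. The paper's key observation is that non-free condensed under arcs and non-free condensed base arcs are untouched, so the only condensed arcs whose count can change are the condensed over arcs and the free condensed under arcs. The bound $m+4n$ then comes not from a local splitting count but from the global bound of Lemma~\ref{lemcond}\ref{cond3}: a simplified condensed vcd has at most $2n-1$ condensed over arcs and at most $2n-1$ free condensed under arcs, so whatever happens, the simplified $g\cdot D$ has at most $m+(2n-1)+(2n-1)$ condensed arcs. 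Your ``split at most $O(n)$ condensed arcs that traverse the region'' does not obviously control this. The same mechanism underlies the $m+17n$ bound in (b): there the paper counts that each condensed over arc breaks into at most seven pieces under $\tau_i$, giving $7(2n-1)$, and at most three condensed under arcs (those straddling the center points of three consecutive intervals) split in two, so the pre-simplification count is $m+7(2n-1)+3<m+15n$; a further $2n-1$ allows for condensed over arcs created during simplification.

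The more serious gap is your weight argument in (c). Tracking how weights merge across ``at most seven simplifying moves'' does not yield the factor $6$: a single condensed $B$-move can already cause two neighboring condensed arcs to merge, so naively you would get something like $2^7$ per generator, not $6$. The paper avoids this entirely by tracking a different quantity, the number $M$ of non-terminal upper points in the underlying vcd. For $\sigma_i^{\pm}$ one gets $M\mapsto 2M+1$ (each over arc contributes two new points, plus possibly one from the terminal arc), and for $\tau_i$ one gets $M\mapsto 6M$ (each over arc splits into at most seven arcs, adding six points). Since every arc in a condensed over or under arc is incident to at least one non-terminal upper point, the weight of any condensed arc is bounded by $M$, and iterating from $M=0$ for $I_n$ gives $6^{|\beta|-1}<6^{|\beta|}$. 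This is where the constant $6$ actually comes from; your merging argument would need a replacement for this step.
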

\begin{proof}
\begin{enumerate}[a)]
\item Assume without loss of generality that $g=\sigma_i$. Due to the definition of the action of $\VB_n$ on $\VCD_n$, even the trivial element $1_{\VB_n}$ acting on $E$ produces a vcd $1_{\VB_n} \cdot E$ that is not simplified, but requires some superficial simplifying $B$-moves and at most $n$ simplifying $T$-moves. We will use a slightly modified action of braid generators on simplified vcds that is a bit more striaghtforward. The vcd associated to $g \cdot E$ will be the obvious vcd obtained by having the terminal point $t_i$ travel over and to the right of $t_{i+1}$ along a semi-circular path, dragging along any over arcs in the way, and coming to rest between the center points of $[t_{i+1},t_{i+2}]_{<_P}$ (or if $i=n-1$, coming to rest after the $<_P$-largest upper point). Note that the action of a braid generator affects only over arcs, and free under arcs in $E$. The union of all over arcs and free under arcs forms the curve diagram for a classical braid in $\pB_n$ with some additional curves. See \cite[Figure 20 and Proposition 3.2]{C} for an example of such a factorization. Thus non-free condensed arcs in $E$ are the same as those in $g \cdot E$ and in any vcd related to $g \cdot E$ by $T$ and $B$-moves.

The vcd associated to $g \cdot E$ described in the previous paragraph might not be simplified, and may require a sequence of simplifying $B$-moves followed by a single simplifying $T$-move. The simplifying $B$-moves involve free under arcs enclosing only $t_i$ in $E$ and the simplifying $T$-move, which may or may not be necessary, will involve a terminal under arc enclosing only $t_i$ and incident to $t_{i+1}$ in $E$ (see \cite[Figure 24]{C}). In the condensed vcd $g \cdot D$ this corresponds to a single simplifying condensed $B$-move and a simplifying condensed $T$-move. The simplifying condensed $B$-move may be superficial, or not.

\begin{figure}
\begin{center}
\includegraphics[scale=.9]{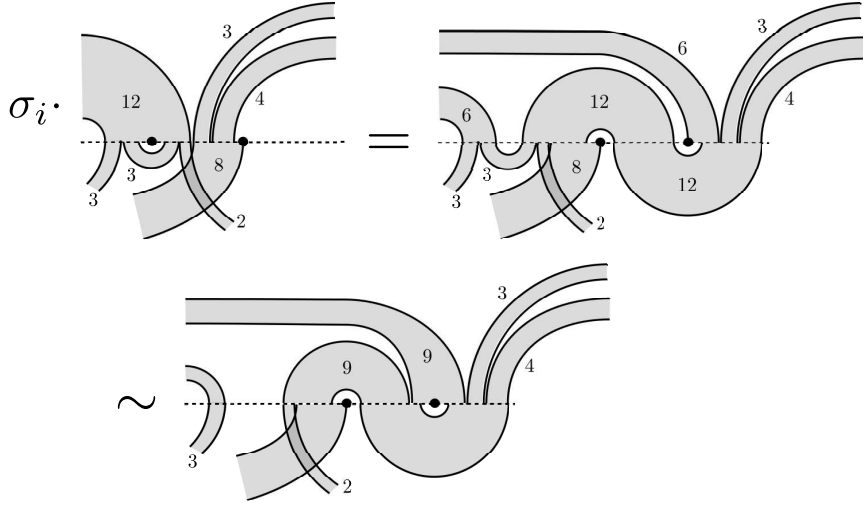}
\caption{The splitting of a condensed over arc.}\label{figbraidaction}
\end{center}
\end{figure}

When calculating $g \cdot D$, if $t_i$ is incident to a condensed over arc $A$, it is possible that $A$ will split into two condensed over arcs in $g \cdot D$, Figure \ref{figbraidaction} describes this situation (note that Lemma \ref{lemcond}\ref{cond3} still holds).

The simplified condensed representative of $g \cdot D$ will have the same number of non-free condensed under arcs, non-free condensed base arcs and free condensed base arcs as $D$. The only changes occur in the number of condensed over and free condensed under arcs. By Lemma \ref{lemcond}\ref{cond3} there are at most $2n-1$ of each type, thus the simplified representative of $g \cdot D$ will have at most $m+(2n-1)+(2n-1)<m+4n$ condensed arcs.

\item First we recall in Figure \ref{figpermarcs} the action of $\tau_i$ on over arcs in various positions in $E$, before any simplifications.
\begin{figure}
\begin{center}
\includegraphics[scale=.8]{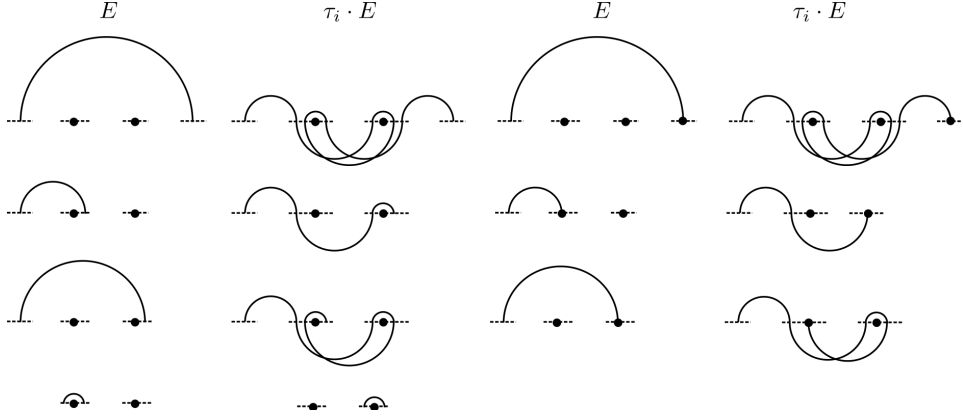}
\caption{The action of $\tau_i$ on over arcs in $E$, before simplifications.}\label{figpermarcs}
\end{center}
\end{figure}
Two parallel over arcs in $E$ that are not incident to $t_i$ or $t_{i+1}$ will remain parallel in $\tau_i \cdot E$, thus Figure \ref{figpermarcs} applies to condensed over arcs that are not incident to $t_i$ or $t_{i+1}$ as well. If a condensed over arc in $D$ is incident to $t_i$ or $t_{i+1}$ or both it will split, as in Figure \ref{figpermsplit}. In any case we see that a condensed over arc may break into a group of at most seven condensed arcs.

\begin{figure}
\begin{center}
\includegraphics[scale=.9]{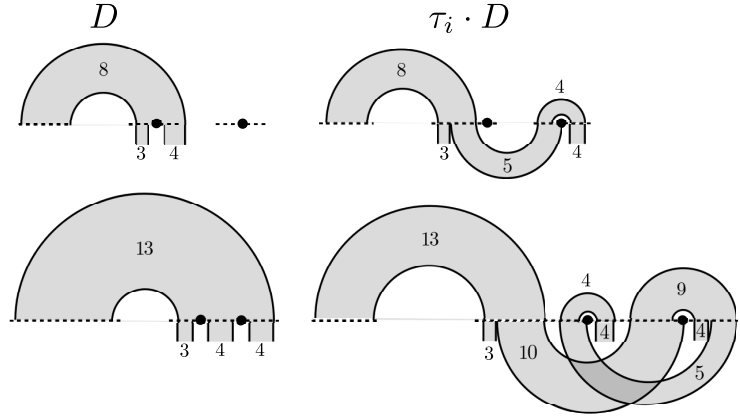}
\caption{The action of $\tau_i$ on condensed over arcs in $D$ incident to $t_i$ or $t_{i+1}$, before simplifications.}\label{figpermsplit}
\end{center}
\end{figure}

The action of $\tau_i$ on under arcs in $E$ only moves their endpoints and does not break them up into sequences of arcs as in Figure \ref{figpermarcs}. An under arc in $E$ is moved by $\tau_i$ only if it is incident to $t_i$ or $t_{i+1}$, or if it is incident to a point in $(t_{i-1},t_i)_{<_P}$ that is right-veering, or any point in $(t_i,t_{i+1})_{<_P}$, or a point in $(t_{i+1},t_{i+2})_{<_P}$ that is left-veering. If a condensed under arc is incident to both center points of one of the intervals $[t_{i-1},t_{i}]_{<_P}$, $[t_i,t_{i+1}]_{<_P}$ or $[t_{i+1},t_{i+2}]_{<_P}$, then it will split into two condensed under arcs. There can be at most three such condensed under arcs in $D$. No other condensed under arc is split by the action of $\tau_i$.

The number of condensed base arcs always remains fixed at $n$. Based on the above, the naive condensed vcd representing $\tau_i \cdot D$ will have at most $m+7(2n-1)+3<m+15n$ condensed arcs.

The naive representative of $\tau_i \cdot E$ may require superficial simplifying $B$-moves along parallel arcs nested at the centers of the intervals $(t_{i-1},t_{i})_{<_P}$, $(t_i,t_{i+1})_{<_P}$, and $(t_{i+1},t_{i+2})_{<_P}$ followed by at most four possible parallel simplifying $T$-moves along the terminal under arcs incident to $t_{i-1}$, $t_i$, $t_{i+1}$ and $t_{i+2}$. The condensed vcd $\tau_i \cdot D$ consequently may thus require three superficial simplifying condensed $B$-moves, one in the center of each aforementioned intervals, followed by at most four simplifying condensed $T$-moves along terminal under arcs also in the centers incident to the terminal points $t_{i-1}$, $t_i$, $t_{i+1}$ and $t_{i+2}$. It is conceivable that this simplification process increases the number of condensed over arcs, thus a simplified representative for $\tau_i \cdot D$ will have at most $m+15n+(2n-1)<m+17n$ condensed arcs.

\item Since $I_n$ has $n$ condensed arcs (the base arcs), applying Lemmas \ref{lemsimp}\ref{simp1} and \ref{lemsimp}\ref{simp2} we see that the simplified repersentative of $\beta \cdot I_n$ has at most $n+17n|\beta|=(1+17|\beta|)n$ condensed arcs. 

If a simplified vcd $E$ has $M$ non-terminal upper points then it has at most $M$ over arcs. In calculating the naive vcd representing $\sigma_i \cdot E$, each over arc may undergo a single non-simplifying $B$-move as it is dragged through the upper line contributing two extra points. If the terminal arc incident to $t_i$ is an under or base arc, it may contribute a single point as well. Thus the naive representative of $\sigma_i \cdot E$ may have at most $2M+1$ non-terminal upper points, and the simplified representative will have at most $2M+1$ non-terminal upper points since simplifications decrease the number of non-terminal upper points. Similar reasoning applies to $\sigma_i^{-1} \cdot E$.

In calculating a naive representative for $\tau_i \cdot E$, under arcs are simply moved around. Each over arc may split into a sequence of at most seven arcs as in Figure \ref{figpermarcs}, contributing an extra six points. Thus if $E$ has $M$ non-terminal upper points, then it has at most $M$ over arcs and so $\tau_i \cdot E$ has at most $6M$ non-terminal upper points.

Since $I_n$ has $0$ non-terminal upper points, a simplified representative for $\beta \cdot I_n$ then has at most $(f_2)^{|\beta|-1}(f_1(0))=6^{|\beta|-1}$ non-terminal upper points (where $f_1:M \mapsto 2M+1$ and $f_2:M \mapsto 6M$). The weight of each condensed base arc is $1$. Any over or under arc must be incident to at least one non-terminal upper point, thus each arc in a condensed over or condensed under arc must be incident to at least one non-terminal upper point. Thus the weight of each condensed over or condensed under arc must be at most $6^{|\beta|-1}<6^{|\beta|}$.
\end{enumerate}
\end{proof}

\begin{theorem}\label{thmmain}
If $\beta \in \VB_n$ is a virtual braid word then the simplified representative of the condensed virtual curve diagram $\beta \cdot I_n$ can be calculated in $O(|\beta|^3n)$ time.
\end{theorem}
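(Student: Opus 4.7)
The plan is to process the letters of $\beta$ one at a time from right to left, maintaining a simplified condensed vcd $D$ in a data structure and starting from $D = I_n$. The two essential ingredients will be Lemma \ref{lemsimp}(a,b), which says that each generator causes only $O(1)$ simplifying condensed $T$ and $B$-moves in a bounded local neighborhood, and Lemma \ref{lemsimp}(c), which bounds the total number of condensed arcs by $(1+17|\beta|)n$ and each weight by $6^{|\beta|}$, so that every weight has only $O(|\beta|)$ decimal digits.

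For the data structure I would use a doubly-linked list representing $<_{P\urcorner}$ whose nodes are the terminal points and the corners, each back-pointing to the condensed arc it belongs to; a record for each condensed arc storing its type in $\{o,u,b\}$, its corner nodes, and its weight $|A|$ as a decimal digit string; and, for each terminal point $t_i$, direct pointers to the condensed arcs incident to $t_i$. With this setup every weight operation (addition when two condensed arcs merge under a simplifying condensed $B$-move, replacement when one is split as in Figure \ref{figbraidaction} or \ref{figpermsplit}, or comparison with zero) takes $O(|\beta|)$ time, and initializing $D=I_n$ (the $n$ weight-one base arcs interleaved with the terminal points) takes $O(n)$ time.

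The main loop reads the letters of $\beta$ in order and updates $D\mapsto g\cdot D$. Using the terminal-point pointers one locates the $O(1)$ condensed arcs in the support of $g$ (near $t_i,t_{i+1}$ for $g=\sigma_i^{\pm 1}$, and additionally near $t_{i-1},t_{i+2}$ for $g=\tau_i$), builds a naive representative of $g\cdot D$ by locally splitting and reconnecting these arcs as in Figures \ref{figbraidaction} and \ref{figpermsplit}, and then performs the at most seven simplifying condensed $T$ and $B$-moves promised by Lemma \ref{lemsimp}(a,b). Each such step involves $O(1)$ pointer manipulations (deleting a bounded number of corners from the $<_{P\urcorner}$ list, retyping or relinking a bounded number of condensed arc records) and a bounded number of weight arithmetic operations.

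For the cost analysis: in the worst case locating the relevant arcs, traversing the linked list across the affected region, and reading or updating weights requires work proportional to the size of the data structure, which holds $O(|\beta| n)$ condensed-arc records each carrying an $O(|\beta|)$-digit weight; a conservative per-generator bound is therefore $O(|\beta|^2 n)$. Multiplying by the $|\beta|$ letters of $\beta$ gives the claimed $O(|\beta|^3 n)$. The hard part of the theorem is not really the algorithm itself, which is essentially straightforward bookkeeping once the local action of each generator is specified by the figures of Section 4; it is the combined control over (i) the linear-in-$|\beta|$ growth of the number of condensed arcs and (ii) the linear-in-$|\beta|$ digit length of the weights, both of which are supplied by Lemma \ref{lemsimp}(c).
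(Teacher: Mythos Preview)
Your argument is correct and follows essentially the same route as the paper: apply the generators one at a time, use Lemma~\ref{lemsimp}(\ref{simp1},\ref{simp2}) to bound the number of simplifying condensed moves per generator by a constant, and use Lemma~\ref{lemsimp}(\ref{simp3}) to bound the data-structure size by $O(|\beta|^2 n)$, giving $O(|\beta|^2 n)$ work per generator and $O(|\beta|^3 n)$ overall. The paper's proof is terser about the data structure (it just speaks of ``scanning through the entire structure once'' in $O(m\log r)$ time), whereas you spell out a concrete linked-list representation, but the logic and the invocation of the lemma are the same.
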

\begin{proof}
Note that a simplified condensed vcd $D$ with $n$ curves and $m$ condensed arcs $A_1,\ldots,A_m$ can be encoded in $O(m\log(r))$ space where $r=\max_i \{|A_i|\}$. A simplifying condensed $T$ or $B$-move can be performed by scanning through the entire structure once, i.e. $O(m \log(r))$ time.

By Lemma \ref{lemsimp}\ref{simp1} and \ref{lemsimp}\ref{simp2} the number of simplifications in calculating a simplified condensed vcd $g \cdot D$ from a simplified condensed vcd $D$ is constant, and so takes $O(m \log(r))$ time.

For a virtual braid word $\beta \in \VB_n$ the simplified condensed representative $D$ of $\beta \cdot I_n$ has by Lemma \ref{lemsimp}\ref{simp3} at most $(1+17|\beta|)n$ condensed arcs each with weight at most $6^{|\beta|}$. Thus $D$ can be encoded in $O(|\beta|^2n)$ space. Given a length one virtual braid word $g \in \VB_n$, calculating a simplified condensed representative for $(g \beta) \cdot I_n=g \cdot D$ takes $O(|\beta|^2n)$ time. Thus a simplified representative of the condensed vcd $\beta \cdot I_n$ can be calculated in $O(|\beta|^3n)$ time.
\end{proof}


\begin{thebibliography}{99}
\bibitem[Ar]{A}
E. Artin, \textit{Theory of Braids}, Annals of Mathematics, Second Series, Vol. 48, No. 1 (Jan., 1947), 101--126.
	
\bibitem[BCP]{BCP}
P. Bellingeri, B.A. Cisneros de la Cruz, L. Paris, \textit{A simple solution to the word problem for virtual braid groups}, Pacific Journal of Mathematics 283 (2016), 271--287.

\bibitem[C]{C}
O. Chterental, \textit{Virtual braids and virtual curve diagrams}, J. Knot Theory Ramifications, Volume 24, Issue 13, November 2015.

\bibitem[DW]{DW}
I. Dynnikov, B. Wiest, \textit{On the complexity of braids}, J. Eur. Math. Soc. 9 (2007), 801--840.

\bibitem[GP]{GP}
E. Godelle, L. Paris, \textit{K($\pi$, 1) and word problems for infinite type Artin–Tits groups, and applications to virtual braid groups}, Mathematische Zeitschrift, December 2012, Volume 272, Issue 3-4, 1339--1364
\end{thebibliography}
\end{document}